\newtheorem{thm}{Theorem}[section]
\newtheorem{corollary}[thm]{Corollary}
\newtheorem{prop}[thm]{Proposition}
\newtheorem{defn}[thm]{Definition}
\newcounter{step}
\newcounter{claim}
\newcounter{subclaim}
\newenvironment{proof}{\textbf{Proof}\ }{\hfill $\square$}
\newenvironment{claim}{\vspace{0.75\baselineskip}
\textbf{Claim\ }}{\vspace{0.75\baselineskip}}
\newcommand{\oshs}{one-sided Heegaard splitting}
\newcommand{\oss}{one-sided splitting}
\newcommand{\tshs}{two-sided Heegaard splitting}
\newcommand{\tss}{two-sided splitting}
\newcommand{\gi}{geometrically incompressible}
\newcommand{\gc}{geometrically compressible}
\def\co{\colon\thinspace}
\begin{document}

\thispagestyle{empty}

\scalebox{2}{}

\begin{center}
\scalebox{1.5}{\textbf{Incompressible one-sided surfaces}}\\
\vspace{2mm}

\scalebox{1.5}{\textbf{in filled link spaces}}
\vspace{\baselineskip}

\textsc{Loretta Bartolini}
\vspace{\baselineskip}

\parbox{.8\textwidth}
{
\textbf{Abstract}
\vspace{.3\baselineskip}

\footnotesize{When a Dehn filled link manifold contains a \gi{} one-sided surface, it is shown there is a unique boundary incompressible position that the surface can take in the link space. The proof uses a version of the sweep-out technique from \tshs{} theory. When applied to \oshs{s}, this result can be used to complete the classification of \oss{s} of $(2p, q)$ fillings of Figure 8 knot space: determining that fillings with $|{2p \over q}| <3$ have two non-isotopic \gi{} \oss{} surfaces.}
}
\end{center}

\section{Introduction}

The study of orientable incompressible surfaces in knot complements has been an active area since Lickorish established that any closed, orientable $3$-manifold can be obtained by Dehn filling such a space~\cite{lickorish}. By understanding incompressible surfaces in knot or link spaces, which is a more tractable problem than deciding whether a closed manifold is itself Haken, one can hope to gain an informed perspective on whether incompressible surfaces persist in the filled manifold. However, the process of generalising is heavily limited by the unpredictable behaviour of orientable surfaces during Dehn filling. Both incompressible surfaces and \tshs{} surfaces become difficult to control when solid filling tori are introduced; properties of incompressibility and strong irreducibility respectively are no longer assured.

Given one-sided surfaces have a natural connection to the $\mathbb{Z}_2$-homology of a manifold, it is intuitive that such surfaces be more predictable under Dehn filling. Homological constraints effectively `pin' the surface in the link space and the solid filling tori, allowing only a limited range of movement in between. Formalising this notion using the sweep-out techniques of Rubinstein and Scharlemann~\cite{rubin_scharl}, it can be shown that any closed, \gi{}, one-sided surface in a filled link space cannot restrict to multiple non-isotopic boundary incompressible surfaces when confined to the link manifold.

This result is of particular utility when considering classes of manifolds obtained from fillings of a particular space, for which the \gi{}, boundary incompressible surfaces are known. For example, such surfaces in Figure 8 knot space are well-known through the work of Thurston~\cite{thurston}, and Dehn fillings of this space have been of significant and lasting interest, given all but the simplest handful of fillings are hyperbolic. As such, this class provides an immediate source of non-trivial examples.

The classification of \gi{} one-sided surfaces in `most' $(2p, q)$-fillings of Figure 8 knot space has been established in~\cite{nonhaken}, however this is limited to when $|{2p \over q}| >3$, hence a single minimal genus surface is present. The main result herein serves to distinguish between multiple minimal genus surfaces, thus extending the classification to the whole class.

Given the direct links between \gi{} one-sided surfaces and \oshs{s}, \oss{s} are thus fully classified for the entire class of hyperbolic $3$-manifolds obtained from even Dehn fillings of Figure 8 knot space. This is of particular note given no classification exists for \tss{s} of any closed hyperbolic $3$-manifold. Furthermore, this demonstrates a degree of control over \oss{} surfaces during Dehn filling that is not currently known to hold for \tss{s}.

\section{Preliminaries}
Throughout, let $M$ be a compact, orientable $3$-manifold and consider all manifolds and maps as PL.

\begin{defn}
A link space or manifold is a compact $3$-manifold $M$, with non-empty boundary consisting of a collection of tori. Call $M$ a knot space if its boundary is a single torus.
\end{defn}

Note that this is a generalisation of a knot or link exterior in $S^3$.

\begin{defn}
A one-sided surface $K \subset M$ is \gi{} if any simple, closed, non-contractible loop on $K$ does not bound an embedded disc in $M \setminus K$. Call $K$ \gc{} if it is not \gi{}.
\end{defn}

\begin{defn}
A one-sided surface $K$ in a bounded $3$-manifold $M$ is boundary compressible if there exists an embedded bigon $B \subset M$ with $\partial B = \alpha \cup \beta$, $B^{\circ} \cap K = \emptyset$, where $\alpha \subset K, \beta \subset \partial M$ and $\alpha$ is not homotopic into $\partial M$. Call $K$ boundary incompressible in $M$ if no such bigon exists.  
\end{defn}

Note that since $K$ is one-sided, if $K \cap \partial M$ is essential on $K$, a boundary compressing bigon can correspond to the boundary compression of a M\"obius band. In this case, $\beta$ has ends on locally opposite sides of $\partial K$ and the move is referred to as a {\it M\"obius band compression}. If $\beta$ has ends on locally the same side of $\partial K$, this corresponds to a boundary compression in the usual sense of orientable surfaces and is referred to here as an {\it orientable boundary compression}. Note that if $K$ is \gi{}, a M\"obius band compression is the only form a boundary compression can take, as an orientable move would correspond to a geometric compression of the surface.

\begin{prop}[\cite{nonhaken}]\label{prop:boundary_incomp}
Any \gc{} one-sided surface with non-empty boundary in a link manifold, which has no essential annuli, can be arranged into a set of M\"obius bands in a neighbourhood of the boundary, and a boundary incompressible component in the complement, which is unique up to isotopy.
\end{prop}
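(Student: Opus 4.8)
The plan is to split the statement into an existence part (the surface admits the stated arrangement) and a uniqueness part (the boundary incompressible piece is well defined up to isotopy), with essentially all the difficulty residing in the second.

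For existence I would induct on $-\chi(K)$. If $K$ is already boundary incompressible in the complement of a collar of $\partial M$ there is nothing to prove. Otherwise there is a boundary compressing bigon $B$, and the relevant hypothesis is geometric incompressibility: by the remark preceding the proposition every boundary compression of a \gi{} surface is a M\"obius band compression, since an orientable boundary compression would be a genuine geometric compression. The bigon $B$ therefore exhibits a M\"obius band subsurface of $K$ that can be isotoped into the collar $\partial M \times I$; doing so strictly raises the Euler characteristic of the part of $K$ lying outside the collar. As $-\chi$ is bounded below and drops at each step, after finitely many M\"obius band compressions the portion of $K$ outside the collar is a boundary incompressible component $K_0$, and everything split off has been gathered as M\"obius bands in a neighbourhood of $\partial M$. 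This is the required arrangement.

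For uniqueness, suppose two such processes produce boundary incompressible components $K_1$ and $K_2$; I must show they are isotopic. I would isotope $K_1$ and $K_2$ to meet transversally with $|K_1\cap K_2|$ minimal and run the usual innermost disc/outermost arc analysis. A trivial intersection curve (one bounding a disc on $K_1$ or $K_2$) is removed using geometric incompressibility of $K$ together with irreducibility of $M$; an outermost intersection arc would cut off a bigon and hence a boundary compression, contradicting boundary incompressibility of $K_1$ or $K_2$. The essential point is that a remaining intersection curve essential on both surfaces, or more directly the frontier annulus of a M\"obius band that the two processes place differently, would produce an essential annulus in $M$: recall that the frontier of a regular neighbourhood of a M\"obius band is an annulus, being the orientation double cover of its core. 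Since $M$ has no essential annuli, such an annulus must be compressible — whence a geometric compression of $K$, impossible — or boundary parallel, forcing the corresponding M\"obius band into the collar.

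The hard part is exactly this last reconciliation. Having shown each frontier annulus is boundary parallel, I would conclude that all the splitting moves in both processes can be isotoped into a fixed collar $\partial M \times I$, where they are carried out independently of order; the two sequences then remove the same M\"obius bands up to isotopy and reordering, so $K_1$ and $K_2$ agree up to isotopy. Translating the purely combinatorial absence of essential annuli into this genuine order-independence — and thereby into a well-defined isotopy class for $K_0$ — is where the argument must be handled with care, and it is the main obstacle.
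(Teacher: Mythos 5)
The paper itself offers no proof of this proposition --- it is imported wholesale from the cited Bartolini--Rubinstein reference --- so there is no in-paper argument to compare yours against; I can only assess the proposal on its own terms. (One reading point first: the statement as printed says ``geometrically compressible'', but the remark preceding it and every later invocation --- ``a bounded, \gi{}, one-sided surface \ldots boundary compresses to a unique boundary incompressible surface'' --- show the intended hypothesis is geometric \emph{in}compressibility, which is what you assumed; that reading is the right one.)

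Your existence half is sound in outline: geometric incompressibility forces every boundary compression to be a M\"obius band compression, each such move is a compression along an arc and so raises $\chi$ of the piece outside the collar by $1$ (note it is the arc-compression, not the removal of the band, that drives the induction --- a M\"obius band itself has $\chi=0$, so ``splitting off a band'' does not obviously reduce complexity until you phrase it this way), and $\chi\le 1$ bounds the process. The genuine gap is in uniqueness, and you have in effect flagged it yourself. Two specific problems. First, the innermost-disc/outermost-arc exchange between $K_1$ and $K_2$ can at best make them disjoint or leave only curves essential on both; neither conclusion is an isotopy, and the claim that a surviving essential intersection curve ``produces an essential annulus'' is not yet an argument --- an intersection curve of two surfaces is not an annulus, and you do not say which annulus is meant nor why it is incompressible and not boundary-parallel. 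Second, the annulus you do name --- the frontier of a neighbourhood of a M\"obius band already pushed into the collar --- lies in $T\times I$ and is automatically boundary-parallel there, so it carries no information. The essential annulus that the no-essential-annuli hypothesis is meant to kill must instead arise from a M\"obius band of $K$ that one compression sequence removes and the other retains: the frontier annulus of such a band in $M_0$ is incompressible (its core double-covers the band's core, so compressing it would geometrically compress $K$) and cannot be boundary-parallel (else both sequences would remove the band), and this contradiction is what forces order-independence. Making that precise is exactly the step you defer to the final sentence, and it is the entire content of the uniqueness claim --- the analogous statement for orientable surfaces under boundary compression is false in general. As written, the proposal establishes existence but not uniqueness, and uniqueness is the part Theorem~\ref{thm:unique_position} actually uses.
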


This is an important result, establishing a canonical form for \gc{} one-sided surfaces in link manifolds.

\section{Incompressible one-sided surfaces in filled link\\ spaces}

\begin{thm}\label{thm:unique_position}
A \gi{} one-sided surface embedded in an even Dehn filling of a link manifold, where the surface is dual to the cores of the solid filling tori and the link space has no essential annuli, has a unique boundary incompressible position in the link space.
\end{thm}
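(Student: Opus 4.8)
The plan is to assume, for contradiction, that $K$ admits two distinct boundary incompressible positions in the link space and to compare them by a sweep-out, with \gi{} playing the role that strong irreducibility plays in the \tshs{} theory of Rubinstein and Scharlemann. Write the filling as $\hat M = M \cup V_1 \cup \cdots \cup V_n$, where the $V_i$ are the solid filling tori. Since the filling is even and $K$ is dual to the cores, the first step is to record that $K \cap V_i$ may be isotoped to a disjoint union of M\"obius bands whose cores are parallel to the core of $V_i$; this is the homological constraint that `pins' $K$ inside each $V_i$, and it fixes the slope of $\partial(K\cap M)$ on every component of $\partial M$. Let $K^1, K^2$ be two copies of $K$, isotopic in $\hat M$, with $K^1\cap M$ and $K^2 \cap M$ boundary incompressible. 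By Proposition~\ref{prop:boundary_incomp} each of these may be taken to be the canonical boundary incompressible component in $M$, so the theorem reduces to showing that $K^1 \cap M$ and $K^2 \cap M$ are isotopic in $M$.

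Next I would promote each surface to a sweep-out. Because $K^i$ is one-sided, the frontier of a regular neighbourhood is the orientable double $\tilde K^i$, and I would sweep $\hat M$ by a family of surfaces running from $\tilde K^i$ out to a spine of the complementary compression body, using the duality and \gi{} hypotheses to furnish the collapsing structure and adapting the levels so that each meets every $V_i$ in a standard M\"obius-band-cross-interval family. Putting the two sweep-outs $f_1, f_2 \co \hat M \to [-1,1]$ into general position produces the Rubinstein--Scharlemann graphic in $[-1,1]^2$, whose edges record tangencies between the level surfaces of $f_1$ and those of $f_2$.

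I would then carry out the usual region analysis on the graphic. A region is labelled according to whether its $f_1$-level lies mostly above or mostly below the corresponding $f_2$-level, and \gi{} is invoked exactly where strong irreducibility is invoked in the two-sided argument: a geometric compression on one side together with one on the other would assemble a compressing disc for $K$, contradicting \gi{}. The one-sided setting changes only the admissible compressions near $\partial M$, where they are M\"obius band compressions rather than orientable ones, and this is consistent because boundary incompressibility forbids an essential bigon. Locating an unlabelled spanning region then yields a level surface simultaneously isotopic to $\tilde K^1$ and to $\tilde K^2$, whence $K^1\cap M$ is isotopic to $K^2 \cap M$ in $M$, as required.

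The step I expect to be the main obstacle is controlling the solid tori. The delicate point is to show that the homological pinning genuinely prevents the `bad' regions of the graphic, those in which a level surface could be absorbed into or pushed across some $V_i$, from covering the whole square, so that a spanning region must survive. Concretely, I would need to verify that the M\"obius-band families in the $V_i$ persist throughout each sweep-out and that the \gi{} double-compression argument remains valid in the presence of the filling tori, using the even and dual hypotheses to keep the $\mathbb{Z}_2$-intersection of every level with each core equal to one. The `no essential annuli' hypothesis is what permits the appeal to Proposition~\ref{prop:boundary_incomp} and rules out annular interference between the surfaces and $\partial M$.
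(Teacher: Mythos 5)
Your approach diverges from the paper's in ways that introduce genuine gaps. The most serious is that you sweep $\hat M$ from $\tilde K^i$ out to ``a spine of the complementary compression body'': this presupposes that $K$ is a \oshs{} surface, which is not a hypothesis of the theorem (the statement concerns an arbitrary \gi{} one-sided surface dual to the cores; the Heegaard structure enters only in the corollary). Without a compression-body complement there is no global sweep-out, and the Rubinstein--Scharlemann graphic has nothing to run on. Second, even granting the sweep-outs, your labelling compares levels of $f_1$ against levels of $f_2$, but the quantity the theorem controls is the position of $K$ relative to the fixed filling tori; a common level for the two sweep-outs would only recover the fact that the two copies of $K$ are isotopic in $\hat M$ (which is the hypothesis for contradiction, not the conclusion), and says nothing about their restrictions to the link space. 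Third, your use of \gi{} as a stand-in for strong irreducibility --- ``a geometric compression on one side together with one on the other would assemble a compressing disc'' --- does not translate: a one-sided surface has connected complement, so there are no two sides on which to find disjoint compressions. Finally, the claim that homological pinning ``fixes the slope of $\partial(K\cap M)$ on every component of $\partial M$'' is false and conceals a key difficulty: M\"obius band compressions across a boundary torus change the boundary slope, and handling exactly this phenomenon is Case (5) of the paper's argument. What the duality and evenness hypotheses actually give is that $K$ meets each boundary torus in an odd number of homologically non-trivial curves throughout.

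The paper's proof is a one-parameter argument run in the opposite direction. Assume $K$ and $L$ are isotopic in the filled manifold but compress to non-isotopic boundary incompressible surfaces $K_0$, $L_0$ in the link space $M_0$; put the isotopy $\varphi_t$ in general position with respect to the fixed tori $T_k$; label each interval between critical values according to whether the image of $K$ boundary compresses to $K_0$ or to $L_0$ (well defined and mutually exclusive by Proposition~\ref{prop:boundary_incomp}, which is where the no-essential-annuli hypothesis enters); then check, case by case through the centre and saddle tangencies, that no critical point can change the label, yielding the contradiction. If you want to salvage your write-up, replace the two-parameter graphic with this single-parameter analysis of the isotopy against the fixed boundary tori.
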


\begin{proof}
Take an irreducible Dehn filling $M$ of the link manifold $M_0$ that has no essential annuli, with $\{M_{T_k}\} = \overline{M \setminus M_0}$ the collection of solid filling tori. Let $K, L \subset M$ be embedded, \gi{} one-sided surfaces of the same genus.

\begin{claim}
If $K$ and $L$ boundary compress across $\{T_k\}$ to different boundary incompressible surfaces in $M_0$, then they are non-isotopic in $M$.
\end{claim}

In the filled manifold, boundary compressions across the boundaries of the solid filling tori can take two forms: non-orientable and orientable. The non-orientable compressions correspond to pushing M\"obius bands back and forth across a torus; a process that can continue indefinitely. Here, surfaces are to be considered as boundary incompressible with respect to the link space, therefore such compressions are completed maximally out of $M_0$. Orientable boundary compressions correspond to a loop of intersection that bounds a trivial disc on the surface, since $K, L$ are \gi{}; this disc may be on either side of a torus $T_k$. Therefore, orientable compressions occur in both directions across the tori, however, the process terminates after a finite number of steps, after which all trivial loops of intersection are removed and the surface is `orientable boundary incompressible' from either side. Note that orientable compressions only affect non-orientable compressions trivially.

In the link space $M_0$, boundary compress $K, L$ across the boundary of the solid filling tori to get boundary incompressible surfaces $K_0, L_0$, respectively, that are non-isotopic in $M_0$.

Suppose, by way of contradiction, that $K$ and $L$ are isotopic in the filled manifold $M$. Then there exists an isotopy $\varphi \co M \times [0, 1] \rightarrow M$ such that $\varphi(K \times \{0\}) = K$ and $\varphi(K \times \{1\}) = L$. Parametrise $\varphi$ by $t \in [0, 1]$, such that $\varphi_{t}(K) = \varphi(K \times \{t\})$.

Putting the $\varphi_t$ in general position relative to each component $T_k = \partial M_{T_k} \subset \partial M_0$ ensures the inverse images $\varphi_t^{-1}(T_k)$ are smooth surfaces in the domain $K \times [0,1]$. Moreover, for finitely many values $t_i$ of $t$, the inverse images $\varphi_{t_i}^{-1}(T_k)$ are tangent to $K \times \{t_i\}$. These tangencies can be either maxima or minima or of saddle type. Call the points $t_i \in (0, 1)$, and the surfaces corresponding to them, {\it critical}.

Subdivide $[0, 1]$ into intervals bounded by critical points: $0 < t_1 < \ldots < t_n < 1$. Extending this subdivision, $K \times I$ is decomposed into regions, bounded by critical surfaces, on the interior of which $\varphi_t(K)$ and $\partial M_0$ are transverse and intersect in a collection of simple closed curves.

Characterise the curves of intersection between each $T_k$ and the image of $K$ by whether they bound discs on the boundary of the solid filling tori. By the geometric incompressibility of $K$, curves that are essential on the image of $K$ do not bound discs in $M$, hence are essential on both $T_k$ and $M_{T_k}$.

The existence of intersection curves that are essential on each $T_k$ is determined by homology. The core $\alpha_k$ of a solid filling torus $M_{T_k}$ represents a non-trivial class in $H_1(M; \mathbb{Z})$, therefore $H_1(M;  \mathbb{Z}_2)$, as the Dehn filling is even. Since $K$ is \gi{}, it represents a non-trivial $\mathbb{Z}_2$-homology class. As the first and second $\mathbb{Z}_2$-homology classes are dual by Poincar\'e duality, the respective representatives of $\alpha_k, K$ have intersection number one mod $2$. Hence, $K$ intersects every $T_k$ in an odd number of homologically non-trivial curves. All such curves are essential on $T_k$, yet may be trivial on $K$.

\subsection{Labelling regions}

As $K$ sweeps across $K \times I$ under the isotopy $\varphi_t$, the curves of intersection with each torus $T_k$ change as the surface passes across it. Label regions $K \times (t_i, t_{i+1})$ as $\mathcal{K}$ (or $\mathcal{L}$) if the corresponding image of $K$ boundary compresses to $K_0$ (or $L_0$) in $M_0$.

Note that no region carries both $\mathcal{K}$ and $\mathcal{L}$ labels, as a bounded, \gi{}, one-sided surface in a link manifold with no essential annuli boundary compresses to a unique boundary incompressible surface, by Proposition~\ref{prop:boundary_incomp}.

Since $M$ contains no essential annuli, the behaviour of $K$ at any one boundary torus is isolated from all others. As such, without loss of generality, consider how the labels are affected by critical points on one torus $T_k$:

\subsection{Maxima and minima}

At the extremum critical points, curves that are inessential on the torus $T_k$ are added or removed without changing existing curves. Consider the effect of such a move on the one-sided surface:

Suppose $t_i$ is a minimum and the image of $K$ has a single new inessential curve of intersection $\sigma$ with the torus $T_k$ in $(t_i, t_{i+1})$. Since $\sigma$ is inessential on $T_k$ and $K$ is \gi{}, $\sigma$ bounds discs $d \subset T_k$ and $D \subset K$.

Surger $K$ along $\sigma$, gluing parallel copies of $d$ to $K \setminus \sigma$ on either side of $T_k$, to get $\bar{K}$ and a $2$-sphere $d \cup D$. Since $M$ is irreducible, each $2$-sphere bounds a $3$-cell in $M$. The surfaces $K, \bar{K}$ differ only by the surgery to remove the trivial $2$-sphere $d \cup D$. Consider the potential boundary compressions of $K$ and $\bar{K}$ across $T_k$:

Consider the curve $\sigma$ on $K$. There may be additional inessential curves of intersection $\sigma_1, \sigma_2, \ldots, \sigma_m$ between $d$ and $D$. Partially order the set according to nesting within $\sigma_0 = \sigma$, where $\sigma_m$ is innermost. Since $K$ is \gi{} and $\sigma_m$ is innermost, $\sigma_m$ bounds discs $d_m$, $D_m \subset K$, the interiors of which are disjoint from $T_k$. Therefore, an orientable boundary compression of $K$ removes $\sigma_m$. Repeating this argument, working from innermost curves outwards, results in a series of orientable boundary compressions that removes the set $\{\sigma_k\}$. As such moves do not affect any others nontrivially and $K$ is isotopic to $\bar{K}$ in $M$, with the only difference between the surfaces being the nest of intersections $\{\sigma_k\}$, after orientable compressions, all such intersections are removed and $K$ is isotopic to $\bar{K}$ in $M_0$. Therefore, the two surfaces share all further boundary compressions and compress to the same boundary incompressible surface in $M_0$. Hence, the label does not change at $t_i$.
If $t_j$ is a maximum, $(t_{j-1}, t_j)$ has more inessential curves of intersection than $(t_j, t_{j+1})$. Applying the above argument, with surgery on the former, rather than the latter, interval, determines that the labels are again consistent across the two intervals separated by a maximum. Therefore, the extremum critical points do not change the labelling.

\subsection{Saddle points}

At saddle points, the existing curves of intersection are altered in one of a number of possible ways. This can manifest as additional (fewer) curves, a change in type, or a change of slope. The precise nature of the move will determine the result. Treat each case separately, where curves are considered as (in)essential on the torus $T$:
\vspace{\stretch{1}}
\pagebreak

\hspace{0.05\textwidth}\parbox[h]{0.9\textwidth}{
\textbf{(1)} An inessential curve is joined to, or split off, an inessential curve;\\
\vspace{-3mm}\\
\textbf{(2)} An inessential curve is joined to, or split off, an essential curve;\\
\vspace{-3mm}\\
\textbf{(3)} An inessential curve is joined to itself non-trivially, creating two essential curves;\\
\vspace{-3mm}\\
\textbf{(4)} An essential curve is joined to itself, or a parallel curve, along a compressing arc, rendering the curve(s) inessential;\\
\vspace{-3mm}\\
\textbf{(5)} An essential curve is joined to itself non-trivially, changing its slope.\\
}

Note that since all essential curves on each $T_k$ are parallel, any arc joining a pair of such curves lies in an annular region of the torus. As such, Case (4) is the only eventuality from joining essential curves. Indeed, this is the limiting factor restricting the argument to a toroidal boundary, rather than expanding to higher genus components.

\paragraph{Cases (1) and (2)}

If a saddle move adds or removes an inessential curve, the same argument as used for a maximum or minimum can be used to show that such a critical point does not change the labelling.

\paragraph{Case (3)}

Consider a region $(t_{i-1}, t_i)$, where a saddle move at $t_i$ joins an inessential curve $\sigma$ to itself, creating two parallel essential curves $\bar{\sigma}_1, \bar{\sigma}_2$ that bound an annulus $A \subset T_k$. Notice that $\sigma$ is not nested within other inessential curves, as this would preclude it from extending along an essential curve. However, the original curve $\sigma$ may contain nested curves. Since $\sigma$ is inessential on $T_k$ and $K$ is \gi{}, $\sigma$ bounds discs $d \subset T_k$ and $D \subset K$; in the event of nesting, $D$ crosses $T_k$. All curves $\sigma_1, \sigma_2, \ldots, \sigma_l$ nested within $\sigma$ are inessential and unchanged by the move at $t_i$, therefore can be removed by disc compressions, as per the Maxima and Minima argument. Let $\bar{K}$ be the surface after this disc surgery and $\bar{D}$ be the image of $D$, which is thus contained entirely in $M_0$ (or $M_{T_k}$).

By irreducibility, the $2$-sphere $d \cup \bar{D}$ bounds a $3$-cell in $M_0$ (or $M_{T_k})$ that is disjoint from $\bar{K} \setminus \bar{D}$. The move at $t_i$ extrudes $\bar{D}$, and the cell it bounds, along an essential slope, to become a boundary parallel annulus $A^{\prime} \subset K$, bounded by $\bar{\sigma}_1, \bar{\sigma}_2$.

Surger $K$ along $\bar{\sigma}_1, \bar{\sigma}_2$, gluing parallel copies of $A$ to $K \setminus \{\bar{\sigma}_1, \bar{\sigma}_2\}$ on either side of $T_k$ to get $\hat{K}$ and a torus $\bar{T}$ bounded by $A, A^{\prime}$. Since $A^{\prime}$ is boundary parallel, $\bar{T}$ bounds a solid torus in one of $M_0, M_{T_k}$ and the annular compression does not affect geometric compressibility. Hence, $\bar{T}$ can be disregarded and $\hat{K}$ is \gi{}. Notice that neither the essential curves, $\bar{\sigma}_1, \bar{\sigma}_2$, nor inessential curve, $\sigma$, of intersection remain between $\hat{K}$ and $T_k$. 

Let $\hat{K}^{\prime}$ be $\bar{K}$ after a disc compression along $\bar{D}$ to remove the original curve $\sigma$. Given the relationship between $\sigma$ and $\bar{\sigma}_1, \bar{\sigma}_2$, the annular compression to remove $A^{\prime}$ is equivalent to the disc compression to remove $\bar{D}$. Hence, $\hat{K}^{\prime}$ is isotopic to $\hat{K}$ in $M_0$. Therefore, in $(t_i, t_{i+1})$, the surface after annular compression intersects $M_0$ in the same components as if the disc compression were performed on $(t_{i-1}, t_i)$. However, it is known that such a compression does not change the labelling (by the Maxima and Minima argument), therefore  $(t_i, t_{i+1})$ contains the same components of $K$ as an interval labelled the same as $(t_{i-1}, t_i)$. Hence, $(t_i, t_{i+1})$ has the same label as $(t_{i-1}, t_i)$.

\paragraph{Case (4)}

Consider a pair of essential curves that are parallel on some $T_k$, but not on $K$. Since only saddles in Case (3) can introduce essential curves and such must be parallel on $K$ to existing curves, any non-parallel curves must exist at an endpoint $K \times \{0\} = K_0$ or $K \times \{1\} = L_0$.

Suppose at least one of the boundary incompressible components $K_0, L_0$ has more than one boundary component on $T_k$. Then on $K$ the complement $K \setminus K_0$ (or $K \setminus L_0$) consists entirely of discs in $M_{T_k}$, otherwise $K$ would not be embedded. Since such a surface is boundary incompressible from either side of $T_k$, the isotopy can only act trivially on $K$ at this component of the boundary; introducing curves at a minimum or as in Cases (1) and (2), which can be removed by disc and annular compressions. Specifically, Case (4) does not occur in this instance.

Consider the case where both $K_0, L_0$ have a single boundary component on $T_k$, thus any two essential curves of intersection are parallel on both $T_k$ and $K$. Let $t_i$ be a saddle point at which two essential curves in $(t_{i-1}, t_i)$ are joined to form a single inessential loop in $(t_i, t_{i+1})$. Reverse the argument used in Case (3), applying an annular compression to $(t_{i-1}, t_i)$ and a disc compression to $(t_i, t_{i+1})$ to establish that $t_i$ does not change the labelling.

\paragraph{Case (5)}

Suppose a saddle point $t_i$ joins an essential curve $\gamma$ to itself, changing its slope on $T_k$. This indicates a M\"obius band of $K$ being boundary compressed into, or out of, the solid filling torus. Note that in order for $K$ to remain embedded, such a move occurs only when there is a single essential curve of intersection between $K$ and $T_k$.

If $(t_{i-1}, t_i)$ is labelled $\mathcal{K}$, in this interval the image of $K$ restricts in $M_0$ to $K_0$ plus additional M\"obius bands in a collar of $T_k$. Whether a M\"obius band is added or removed in $M_0$, the resulting surface is again $K_0$ with additional M\"obius bands near $T_k$. As such, it compresses to $K_0$. However, by Proposition~\ref{prop:boundary_incomp}, a bounded one-sided surface boundary compresses to a unique boundary incompressible surface, so it cannot also compress to $L_0$. Therefore, $(t_i, t_{i+1})$ again bears the $\mathcal{K}$ label. Note, however, that there is a change in slope of the non-trivial curve of intersection, which is characteristic of a M\"obius band boundary compression.
\vspace{\baselineskip}

Having thus examined the potential effects of all types of critical point, it can be concluded that while the slope of essential curves may change, no critical point can affect a change of label from $\mathcal{K}$ to $\mathcal{L}$. As such, for the isotopy $\varphi$ to exist, $K_0$ must in fact be isotopic to $L_0$ in $M_0$, thus providing a contradiction.
\end{proof}

\section{One-sided Heegaard splittings of Figure 8 knot space}

In \oshs{} theory, \gi{} splitting surfaces are of key interest, forming, as they do, the basis for any splitting of a non-Haken $3$-manifold~\cite{nonhaken}. As such, having established a strict limitation on the behaviour of \gi{} one-sided surfaces in fillings of knot and link spaces, there are immediate consequences for \oss{s} of such manifolds. Specifically, \oss{} surfaces `built' from different boundary incompressible surfaces in the link space yield non-isotopic splitting surfaces in the filled manifold.

Using the constructions of Bartolini-Rubinstein~\cite{nonhaken} to produce \oss{} surfaces in even fillings of Figure 8 knot space, it becomes possible to differentiate between multiple surfaces of minimal genus. Thus completing the classification of all \oshs{s} for this infinite class of closed hyperbolic $3$-manifolds: a result without precedent for \tss{s}.

\begin{corollary}
In $(2p, q)$ Dehn fillings of Figure 8 knot space, where $p, q \in \mathbb{Z}$, $|p| > |q|$, $|p| > 2$ and $(p, q) = 1$, if $|{2p \over q}| <3$, then the two minimal genus \oss{} surfaces arising from the $(0, 1)$ fibre torus and one of the $(4, \pm1)$ punctured Klein bottles are non-isotopic in the filled manifold.
\end{corollary}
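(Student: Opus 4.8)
The plan is to derive the corollary directly from Theorem~\ref{thm:unique_position}, reducing the non-isotopy of the two splitting surfaces in the filled manifold to the non-isotopy of two familiar boundary incompressible surfaces in the link space. Write $M$ for the $(2p,q)$ filling of Figure 8 knot space $M_0$, with $V = \overline{M \setminus M_0}$ the solid filling torus and $T = \partial V$. Let $K$ be the \oss{} surface built from the $(0,1)$ fibre (a once-punctured torus) and $L$ the \oss{} surface built from one of the $(4, \pm 1)$ punctured Klein bottles, both produced by the Bartolini--Rubinstein construction~\cite{nonhaken}. The strategy has four steps: (i) verify the hypotheses of Theorem~\ref{thm:unique_position} for $K$ and $L$; (ii) identify the boundary incompressible surfaces in $M_0$ to which $K$ and $L$ boundary compress; (iii) observe that these are non-isotopic in $M_0$; and (iv) apply the Claim in the proof of Theorem~\ref{thm:unique_position} to conclude $K$ and $L$ are non-isotopic in $M$.

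For step (i), Figure 8 knot space is hyperbolic, hence atoroidal and anannular, so $M_0$ contains no essential annuli; under the stated constraints on $p,q$ the filling $M$ is itself hyperbolic, hence irreducible and anannular. The first coordinate of the slope is $2p$, so $H_1(M;\mathbb{Z}_2) \neq 0$ and the core of $V$ is $\mathbb{Z}_2$-nontrivial: the filling is even in the required sense. By construction both $K$ and $L$ are \gi{} and dual to the core of $V$. It remains to confirm that $K$ and $L$ have the \emph{same} genus, and this is exactly where $|2p/q| < 3$ enters: the non-orientable genera of the fibre-derived and Klein-bottle-derived surfaces, computed as in~\cite{nonhaken}, coincide and are minimal precisely in this range, whereas outside it one strictly dominates. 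This simultaneously justifies calling $K$ and $L$ the two minimal genus surfaces and supplies the equal-genus hypothesis of the Claim.

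For steps (ii) and (iii), the boundary incompressible surfaces in Figure 8 knot space are exactly the once-punctured torus fibre (boundary slope $0$) and the two once-punctured Klein bottles (boundary slopes $\pm 4$), by the work of Thurston~\cite{thurston}. Boundary compressing $K$ out of $V$ returns the fibre, while boundary compressing $L$ returns the chosen punctured Klein bottle. These two boundary incompressible surfaces are non-isotopic in $M_0$: they carry distinct boundary slopes ($0$ versus $\pm 4$) and are of different topological type, one orientable and one not. With all hypotheses in place, step (iv) applies the Claim verbatim: since $K$ and $L$ boundary compress across $T$ to non-isotopic boundary incompressible surfaces in $M_0$, they cannot be isotopic in $M$, which is the desired conclusion.

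The main obstacle is not the sweep-out argument, which is delivered wholesale by Theorem~\ref{thm:unique_position}, but rather the genus bookkeeping underlying step (i): one must check, using the explicit surfaces of~\cite{nonhaken}, that the fibre-derived and Klein-bottle-derived splittings genuinely share the minimal genus exactly when $|2p/q| < 3$, and that the construction produces \gi{} surfaces dual to the core for every such filling. Once these facts from~\cite{nonhaken} are in hand, the whole distinction between $K$ and $L$ collapses to the evident distinction between the fibre and the Klein bottle in the link space.
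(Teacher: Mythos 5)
Your proposal is correct and follows essentially the same route as the paper: both reduce the corollary to the fact that the two splitting surfaces boundary compress to distinct boundary incompressible surfaces (the fibre versus a punctured Klein bottle) in the knot space, and then invoke Theorem~\ref{thm:unique_position}. The paper's proof is terser, leaning on~\cite{nonhaken} for the hypothesis-checking and genus bookkeeping that you spell out explicitly, but the logical content is identical.
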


\begin{proof}
Such Dehn fillings of Figure 8 knot space have at most two non-isotopic \gi{} \oshs{s}, as established in~\cite{nonhaken}. It is known that uniqueness can be determined when $|{2p \over q}| >3$. However, there are potentially two distinct surfaces when the surfaces arising from the $(0, 1)$ torus and one of the $(4, \pm1)$ Klein bottles have the same, minimal genus. Given such surfaces comprise of distinct boundary incompressible surfaces in the knot space, Theorem~\ref{thm:unique_position} determines that they are indeed non-isotopic in the filled manifold.
\end{proof}

\paragraph{Example}

Consider  the $(8, 3)$ filling of Figure 8 knot space. Take the closed one-sided splitting surfaces $K_{(0, 1)}, K_{(4, 1)}, K_{(4, -1)}$ that arise from the \gi{}, boundary incompressible surfaces in the knot space, bounded by the curves $(0, 1)_K = (-8, 3)_T, (4,  1)_K = (4, -1)_T, (4, -1)_K = (20, -7)_K$ respectively. Using the continued fractions method of Bredon and Wood~\cite{bredon-wood} for the completions in the solid filling torus, the (non-orientable) genera can be calculated to be $3, 3, 5$ respectively.

The non-minimal genus surface $K_{(4, -1)}$ is known to geometrically compress to $K_{(0, 1)}$ by~\cite{nonhaken}. Furthermore, Theorem~\ref{thm:unique_position} determines that the two minimal genus surfaces $K_{(0, 1)}, K_{(4, 1)}$ are non-isotopic. Combined with the fact that \gc{} \oss{s} of non-Haken $3$-manifolds are stabilised, also by~\cite{nonhaken}, any \oss{} of the $(8, 3)$ filling of Figure 8 knot space is isotopic to one of $K_{(0, 1)}, K_{(4, 1)}$, or a stabilisation thereof.

\bibliographystyle{habbrv}
\bibliography{oshs}

\textit{Department of Mathematics and Statistics\\
The University of Melbourne\\
Parkville VIC 3010\\
Australia}

Email: \texttt{L.Bartolini@ms.unimelb.edu.au}

\end{document}